\newtheorem{theorem}{Theorem}[section]
\newtheorem{proposition}[theorem]{Proposition}
\newtheorem{definition}{Definition}[section]
\newtheorem{remark}[theorem]{Remark}
\numberwithin{equation}{section}
\title{On the Toda systems of VHS type}
\author{Chen-Yu Chi}
\address{Department of Mathematics and Taida Institute for Mathematical Sciences, National Taiwan University, Taipei, Taiwan.}
\email{chi@tims.ntu.edu.tw, chi1@ntu.edu.tw}
\begin{document}
\maketitle

\begin{abstract}
We consider the Toda systems of VHS type with singular sources and provide a criterion for the existence of solutions with prescribed asymptotic behaviour near singularities. We also prove the uniqueness of solution. Our approach uses Simpson's theory of constructing Higgs-Hermitian-Yang-Mills metrics from stability. 
\end{abstract}

\normalsize

\section{Introduction}\label{int}
Let $M$ be a compact Riemann surface and $g=ds^2$ be a smooth riemannian metric on $M$, and denote the Laplacian associated to $g$ by $\Delta_g$ and the Gaussian curvature of $g$ by $K_g$. For $\epsilon=\pm 1$, we consider systems of partial differential equations of the following form

\begin{eqnarray}\label{toda}
\epsilon\begin{pmatrix}
\frac{1}{4}\Delta_gu_1-\frac{K_g}{2}\\
\\
\frac{1}{4}\Delta_gu_2-\frac{K_g}{2}\\
\\
\vdots\\
\\
\frac{1}{4}\Delta_gu_n-\frac{K_g}{2}
\end{pmatrix}
=
\begin{pmatrix}
2&-1&&\\
\\
-1&2&&\\
\\
&&\ddots&-1\\
\\
&&-1&2
\end{pmatrix}
\begin{pmatrix}
e^{u_1}\\
\\
e^{u_2}\\
\\
\vdots\\
\\
e^{u_n}
\end{pmatrix}
\end{eqnarray}
\\
on $M$ with finitely many points removed. We call it a Toda system of VHS type or hermitian type according to $\epsilon=1$ or $-1$. Here VHS stands for (polarized complex) variation of Hodge structure. The reason of the name will be clear later. Let $\mu=(\mu_1,\dots,\mu_n)$ be an $n$-tuple of functions defined on $M$ each of which vanishes at all but finitely many points. The $\mu_j$s are called singular strengths and $S:=\{p\in M:\mu_j(p)\neq 0\text{ for some }j\}$ the set of punctures. 
\begin{definition}
A $\mu$-admissible solution to (\ref{toda}) is an $n$-tuple of real-valued smooth functions $(u_1,\dots,u_n)$ on $M\setminus S$ satisfying (\ref{toda}) and behaving near the punctures in the following manner: for each $p\in S$ there exists a sufficiently small coordinate chart $(U,z)$ centered at $p$ and smooth {\it bounded} functions $v_j$ on $U$ such that
$$u_j=2\mu_j(p){\rm log}|z|+v_j, $$
on $U\setminus\{p\}$, $j=1,\dots,n$.
\end{definition}

The Toda systems we consider here are usually called type $A$, manifesting its relation to $A_{n+1}$. One can consider more general types of Toda systems by replacing the Cartan matrix of type $A$ by those of other types. For the case of smooth solutions on $M$, i.e. $\mu=(0,\dots,0)$, there have appeared many studies relating Toda systems with harmonic maps and Higgs bundles, for example, \cite{baraglia} and \cite{jost-wang}. The case with nontrivial singular sources is more involved and is the subject of the current paper. 

Despite the formal similarity, these two types of Toda systems are quite different in nature, both analytically and geometrically. The VHS type is related to the notion of stability and the hermitian type is more related to the consideration of harmonic maps. In this paper, after developing a general geometric formalism of these systems, which is similar for both types, we will focus on Toda systems of VHS type and treat those of hermitian type in another paper. 

Our main result is the following theorem (Theorem \ref{main thm}).\\
\\
{\bf Theorem.} {\it For any assignment of singular strengths $\mu=(\mu_1,\dots,\mu_n)$ there exists at most one $\mu$-admissible solution to the Toda system of VHS type. There exists a $\mu$-admissible solution $(u_1,\dots,u_n)$ to the Toda system of VHS type if and only if
$$d_{n-l+1}+\cdots+d_n <l(n-l+1)({\rm genus}(M)-1),$$
$l=1,\dots,n$, where 
$$d_k:=\sum_{p\in M}\left(-\frac{1}{n+1}\sum_{j=1}^n(n-j)\mu_j(p)+\sum_{j=1}^k\mu_j(p)\right),$$
$k=1,\dots,n$.}\\

The arrangement of this paper is as follows. In Section \ref{Cvhs}, we introduce the notion of complex pre-VHS and diagonality and show that every Toda system in the above sense corresponds to a special type of complex variation of Hodge structure over the punctured Riemann surface $X=M\setminus S$ whose underlying bundle is a canonically chosen smooth hermitian vector bundle $(V,h)$. If the Gauss-Manin connection preserves the hermitian form $h(C\cdot,\cdot)$ ($C$ being the Weil operator), then the Toda system is of VHS type (the traditional polarization); if instead it preserves the hermitian metric $h$, the Toda system is of hermitian type. In Section \ref{Higgs}, we recall the notion of Higgs bundle and introduce a Higgs bundles $E$ on $X$ related to a system of VHS type. It is the restriction of a natural vector bundle $\widetilde V$ on $M$.

In Section \ref{Higgs}, we establish a correspondence between $n$-tuples of functions on $M\setminus S$ which behave suitably near the punctures and some type of hermitian metrics on $E$ with corresponding asymptotic property. The procedure mimics that of establishing a correspondence between complex variations of Hodge structure and system of Hodge bundles as in \cite{simpson}, simply dropping the flatness requirement on curvature. Under this correspondence, solutions to Toda systems correspond to flat metrics. 

Our main tool in getting the criterion for existence of solutions is the theory of getting Higgs-Hermitian-Yang-Mills metrics (which are flat in our case) from stability, developed by Hitchin \cite{hitchin} and Simpson \cite{simpson}. As mentioned earlier, the presence of singularities is the main difficulty to be overwhelmed. In order to deal with singularities, we have to use the results of \cite{simpson} for quasiprojective curves.\\
\\
{\bf Acknowledgements} \\

I am indebted to Professors Chang-Shou Lin and Chin-Lung Wang for their many valuable suggestions. This work was partially supported by National Science Council of Taiwan.

\section{Complex variation of Hodge structure}\label{Cvhs}
In this section we relate Toda systems with flat connections. The formalism has appeared in studies of Toda systems on a region of $\mathbf R^2$, for example, in \cite{jost-wang}. The vector bundles underlying the flat connections in these classical situations are mainly trivial bundles. We propose a simple globalization as the preparation for further development.
  
Let $X$ be a complex manifold and denote the sheaf of germs of smooth functions by $\mathcal A_X=\mathcal A_X^0$. 
\begin{definition}\label{cvhs}
$(1)$ A complex pre-variation of Hodge structure (complex pre-VHS for short) $(\{V^{r,s}\},\nabla)$ of weight an integer $n$ on $X$ consists of 
\begin{itemize}
\item[(i)] smooth complex vector bundles $V^{r,s}$ over $X$, $r,s\in\mathbf Z$, $r+s=n$, with $V^{r,s}=0$ for all but finitely many $(r,s)$ and
\item[(ii)] a connection $\nabla:\mathcal A_X^0(V)\to\mathcal A_X^1(V)$ on $V:=\oplus_{p+q=n} V^{r,s}$  
which only has components of {\it total} degree $(1,0)$ and $(0,1)$. In other words, $\nabla=\oplus_{r,s}\nabla^{p,q}$ with 
$$\nabla^{r,s}:\mathcal A_X^0(V^{r,s})\rightarrow\mathcal A_X^{1,0}(V^{r-1,s+1})\oplus \mathcal A_X^{1,0}(V^{r,s})\oplus \mathcal A_X^{0,1}(V^{r,s})\oplus\mathcal A_X^{0,1}(V^{r+1,s-1})$$
for each pair $(r,s)$. If we write $\nabla=\theta+\nabla^{\rm Hodge}+\theta'$ where
$$\nabla^{\rm Hodge}=\oplus_{r+s=n}\{\nabla^{{\rm Hodge},r,s}:\mathcal A_X^0(V^{r,s})\longrightarrow\mathcal A_X^{1,0}(V^{r,s})\oplus\mathcal A_X^{0,1}(V^{r,s})\},$$
$$\theta:=\oplus_{r+s=n}\{\theta^{r,s}:\mathcal A_X^0(V^{r+1,s-1})\longrightarrow\mathcal A_X^{1,0}(V^{r,s})\},$$ 
and 
$$\theta':=\oplus_{r+s=n}\{\theta'^{r,s}\mathcal A_X^0(V^{r-1,s+1})\longrightarrow\mathcal A_X^{0,1}(V^{r,s})\},$$
then it is clear that
$\nabla^{\rm Hodge}$ is also a connection and $\theta$ and $\theta'$ are $\mathcal A_X$-linear. $\theta$ is called the pre-Hodge field of the complex pre-VHS $(\{V^{r,s}\},\nabla)$.
\end{itemize}
$(2)$
A complex pre-VHS $(\{V^{r,s}\},\nabla)$ is a complex VHS if furthermore the curvature of $\nabla$ is $0$. If this is the case, $\theta$ is called the Hodge field of the complex VHS.
\end{definition}
\begin{definition}\label{pol}
Let $(\{V^{r,s}\},\nabla)$ be a complex pre-VHS of weight $n$. A Hodge-polarization (resp. hermitian-polarization) consists of $\{h^{r,s}\}$ where $h^{r,s}$ is a smooth hermitian metric on $V^{r,s}$ for each $(r,s)$ such that if $h:=\oplus_{r+s=n} h^{r,s}$ then $h(C\cdot, \cdot)$ (resp. $h(\cdot, \cdot)$) is preserved by $\nabla$, where $C$ is the Weil operator defined by 
$$C|_{V^{r,s}}:=i^{r-s}{\rm id}_{V^{r,s}}=i^n(-1)^s{\rm id}_{V^{r,s}}.$$   
More precisely, this means that 
$$X\langle\sigma ,\tau\rangle =\langle\nabla_X\sigma ,\tau\rangle +\langle\sigma ,\nabla_{\overline X}\tau\rangle,$$ for all $X\in T_xX$, $\sigma,\tau\in \mathcal A_X^0(V)_x$, and $\langle\cdot,\cdot\rangle=h(C\cdot,\cdot)$ (resp. $h(\cdot,\cdot)$). If this is the case, we say that $(\{V^{r,s}\},\nabla)$ is Hodge(resp. hermitian)-polarized by $\{h^{r,s}\}$ (or $h$ for short). It is not hard to see that $\{h^{r,s}\}$ is a Hodge-polarization (resp. hermitian-polarization) if and only if $\nabla^{\rm Hodge}$ preserves $h$ and $\theta'=\theta^*$ (resp. $-\theta^*$), where 
$$\theta^*:\mathcal A_X^0(V)\longrightarrow \mathcal A_X^{0,1}(V)$$ is the adjoint of $\theta$ with respect to $h$.
\end{definition}
Now we specialize to the situation related to Toda systems. Let $M$ be a Riemann surface and $g$ a riemannian metric on $M$. Let $S$ be any closed subset of $M$ and $X:=M\setminus S$. 
Let $K_M$ be the canonical line bundle of $M$. We have a hermitian metric on $K_M$ naturally associated to $g$: if locally one writes $g=\varphi\cdot\overline\varphi$ for some nonvanishing $(1,0)$-form $\varphi$, then $\varphi$ and is a unitary frame of $K_M$. Fix a smooth complex line bundle $L$ such that 
\begin{equation}\label{1}
L^{\otimes (n+1)}=K_M^{\otimes\frac{n(n+1)}{2}}.
\end{equation} 
We define 
\begin{equation}\label{2}
\widetilde V^{n-k,k}:=L\otimes K_M^{\otimes -k}, k=0,\dots,n.
\end{equation} 
and equip $\widetilde V^{n-k,k}$ with the hermitian metrics $\tilde h^{n-k,k}$ canonically associated to that of $K_M$. Let $V^{n-k,k}:=\widetilde V^{n-k,k}|_X$ and $h^{n-k,k}:=\tilde h^{n-k,k}|_V$. Suppose $\nabla$ is a connection on $V$ such that
\begin{itemize}
\item[(a)] ($\{V^{n-k,k}\}, \nabla)$ is a complex pre-VHS which is Hodge(resp. hermitian)-polarized by $\{h^{n-k,k}\}$ and 
\item[(b)] all the corresponding $\theta^{n-k,k}$ are non vanishing.
\end{itemize}

\begin{remark}\label{theta-b}
For each $k$, $\theta^{n-k,k}$ can be viewed as a function $b_k$ on $X$, since 
$$K_M\otimes{\rm Hom}\left(L\otimes K_M^{\otimes -k+1}, L\otimes K_M^{\otimes -k}\right)$$ 
is trivial. Therefore, when talking about the pre-Hodge field of a complex pre-VHS of the form in (\ref{2}) (which is the only kind of complex pre-VHS interesting in the following), we will use $(b_1,\dots,b_n)$ and $\theta$ interchangeably. 
\end{remark}
Let $\varphi$ and $e_L$ be unitary local frames of $K_M$ and $L$ respectively such that 
$e_L^{\otimes(n+1)}$ corresponds to $\varphi^{\otimes\frac{n(n+1)}{2}}$.
Then 
$$e_k:=e_L\otimes \varphi^{\otimes-k}$$ 
is a unitary local fram of $\widetilde V^{n-k,k}$ for each $k$, with respect to which we can express the connection form of $\nabla^{\rm Hodge}$ as a skew-hermitian trace-free diagonal matrix of $1$-forms
$$A^{\rm Hodge}=\begin{pmatrix}
A_0&&&&\\
&\ddots&&&\\
&&A_{k}&&\\
&&&\ddots&\\
&&&&A_n
\end{pmatrix}$$
and $\theta$ as a matrix of $(1,0)$-forms
$$B=\begin{pmatrix}
0&&&\\
B_1&0&&\\
&\ddots&\ddots&\\
&&B_n&0
\end{pmatrix},$$
where 
\begin{equation}\label{3}
B_k=b_k\varphi,\ k=1,\dots,n.
\end{equation}
$\theta'=\epsilon\theta^*$, $\epsilon=\pm 1$ according to the type of polarization, and hence the connection form of $\nabla$ is
\begin{equation}\label{4}
\omega=
\begin{pmatrix}
A_0&\epsilon\overline{B}_1&&&\\
B_1&A_1&\epsilon\overline{B}_2&&\\
&\ddots&\ddots&\ddots&\\
&&B_{n-1}&A_{n-1}&\epsilon\overline{B}_n\\
&&&B_n&A_n
\end{pmatrix}.
\end{equation}
Then the curvature $F_\nabla=d\omega+\omega\wedge\omega$, whose $(k,l)$-entry, $k\leq l$, is
$$\left\{
\begin{array}{cl}
dA_k+\epsilon (B_k\wedge\overline{B}_k-B_{k+1}\wedge\overline{B}_{k+1}),&k=l=0,1,\dots,n;\\
\\
dB_k+(A_{k}-A_{k-1})\wedge B_k,&k=l+1=1,\dots,n;\\
\\
0,&\text{otherwise},
\end{array}
\right.$$
with the convention that $B_0=0=B_{n+1}$.
Then the complex pre-VHS $(\{V^{n-k,k}\}, \nabla)$ is a complex VHS if and only if $F_\nabla =0$, i.e. locally 
\begin{equation}\label{5}
dA_k=-\epsilon (B_k\wedge\overline{B}_k-B_{k+1}\wedge\overline{B}_{k+1}),
\end{equation}
$k=0,1,\dots,n$
and 
\begin{equation}\label{6}
dB_k=(A_{k-1}-A_{k})\wedge B_k,
\end{equation}
$k=1,\dots,n$.
Note that $d\varphi=-i\rho\wedge\varphi$ for a unique real $1$-form $\rho$ and $$d\rho=-\frac{i}{2}K_g\ \varphi\wedge\overline\varphi.$$ (\ref{6}) then becomes
\begin{equation}\label{7}
\overline\partial b_k-ib_k\rho^{0,1}=(A_{k-1}^{0,1}-A_{k}^{0,1})b_k,
\end{equation}
$k=1,\dots, n.$
Since $A^{\rm Hodge}$ is skew-hermitian,
\begin{equation}\label{8}
-\partial \overline{b_k}-i\overline{b_k}\rho^{1,0}=(A_{k-1}^{1,0}-A_{k}^{1,0})\overline{b_k}, 
\end{equation}
$k=1,\dots, n$. By the nonvanishing assumption on $b_k$, \ref{7} (or \ref{8}) is equivalent to
\begin{equation}\label{9}
A_{k-1}-A_{k}=
\frac{\overline\partial b_k}{b_k}
-\frac{\partial\overline b_k}{\overline b_k}-i\rho, 
\end{equation}
$k=1,\dots, n$. On the other hand, by (\ref{5}) and (\ref{9}), we have
\begin{equation}\label{10}
\partial\overline\partial\ln|b_k|^2-id\rho=
\epsilon(-|b_{k-1}|^2+2|b_{k}|^2-|b_{k+1}|^2)\varphi\wedge\overline\varphi,
\end{equation}
$k=1,\dots, n$, which is equivalent to 
\begin{equation}\label{11}
\frac{1}{4}\Delta_g\ln|b_k|^2-\frac{K_g}{2}=\epsilon(-|b_{k-1}|^2+2|b_{k}|^2-|b_{k+1}|^2),
\end{equation}
$k=1,\dots,n$, namely, 
\begin{equation}\label{12}
(u_1,\dots,u_n):=\big(\ln|b_1|^2,\dots,\ln|b_n|^2\big)
\end{equation} 
is a solution of the Toda system. Note that 
\begin{equation}\label{13}
A_0+A_1+\dots +A_n=0.
\end{equation}
By (\ref{9}), we have
\begin{equation}\label{14}
A_0=\frac{1}{n+1}\left(\frac{\overline\partial(b_1^nb_2^{n-1}\cdots b_n)}{b_1^nb_2^{n-1}\cdots b_n}-\frac{\partial(\overline b_1^n\overline b_2^{n-1}\cdots\overline b_n)}{\overline b_1^n\overline b_2^{n-1}\cdots \overline b_n}\right)-i\frac{n}{2}\rho.
\end{equation}

Conversely, suppose $b_1,\dots, b_n$ are nonvanishing smooth functions on $X=M\setminus S$ satisfying (\ref{11}) (and hence (\ref{10})). We can define $A_k$ and $B_l$ locally by the formulas in (\ref{9}), (\ref{14}), and (\ref{3}). Then we put them together to form $\omega$ as in (\ref{4}). It is easy to see this local description actually gives a global connection on $V$. By (\ref{9}), such a connection satisfies (\ref{6}).   
Finally, (\ref{11}) and (\ref{13}) together imply (\ref{5}). In summary, we have obtained the following statement. 
\begin{proposition}\label{toda-cvhs1} 
Let $(M,g)$ be a Riemann surface with a smooth riemannian metric, $X$ an open subset of $M$, $L$ a smooth complex line bundle with $L^{\otimes (n+1)}=K_M^{\otimes\frac{n(n+1)}{2}}$, $V^{n-k,k}:=(L\otimes K_M^{\otimes -k})|_X$, and $h^{n-k,k}$ the metric canonically associated to $g$, $s=0,\dots,n$. There exists a one-one correspondence between solutions $(b_1,\dots, b_n)$ to (\ref{11}) with $\epsilon= 1$ (resp. $-1$) on $X$ and connections $\nabla$ on $V=\oplus_k V^{n-k,k}$ such that $(\{V^{n-k,k}\},\nabla)$ is a complex variation of Hodge structure Hodge (resp. hermitian)-polarized by $\{h^{n-k,k}\}$ with all $\theta^{n-k,k}$ nonvanishing.
\end{proposition}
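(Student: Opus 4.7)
My plan is to verify both directions of the stated correspondence and then check they are mutual inverses; the forward direction is essentially a bookkeeping of the computation already carried out in the text, while the converse requires building a global connection from the $b_k$ and checking it is flat and polarized.

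For the forward direction, I would start with a complex VHS $(\{V^{n-k,k}\},\nabla)$ satisfying the hypotheses and choose local unitary frames $\varphi$ of $K_M$ and $e_L$ of $L$ with $e_L^{\otimes(n+1)}$ corresponding to $\varphi^{\otimes n(n+1)/2}$, inducing the unitary frames $e_k = e_L\otimes\varphi^{\otimes -k}$ of $V^{n-k,k}$. Because $\nabla^{\rm Hodge}$ preserves $h=\oplus h^{n-k,k}$, the block-diagonal part $A^{\rm Hodge}$ is skew-Hermitian; by Remark \ref{theta-b}, the Hodge field is encoded in globally defined functions $b_k$ via $B_k = b_k\varphi$; and the polarization condition forces $\theta' = \epsilon\theta^*$. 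Therefore $\nabla$ takes the form (\ref{4}), and flatness, combined with the computation (\ref{5})--(\ref{11}) already performed, shows that $(\ln|b_1|^2,\ldots,\ln|b_n|^2)$ solves (\ref{11}).

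For the reverse direction, given nonvanishing smooth $b_k$ on $X$ satisfying (\ref{11}), I would define $B_k$ locally by (\ref{3}), $A_0$ by (\ref{14}), and $A_k$ for $k\ge 1$ by telescoping (\ref{9}), then assemble $\omega$ as in (\ref{4}). The essential verifications are: (i) each $A_k$ is purely imaginary, so the diagonal part is skew-Hermitian and $\nabla^{\rm Hodge}$ preserves $h$; (ii) $A_0+\cdots+A_n=0$, which is the algebraic identity making (\ref{14}) consistent with the telescoping from (\ref{9}); (iii) these local matrices patch to a globally defined connection on $V$; (iv) the curvature vanishes, which is just (\ref{5}) and (\ref{6}) read backwards, with (\ref{5}) obtained from (\ref{11}) via (\ref{9})--(\ref{10}); and (v) the form (\ref{4}) with $\theta'=\epsilon\theta^*$ automatically gives the polarization. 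Bijectivity is then immediate from reading (\ref{3}), (\ref{9}), (\ref{14}) either as definitions or as identities satisfied by data arising from a VHS.

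I expect the main obstacle to be step (iii): showing the local description truly assembles into a global connection. Under a unitary change of frame $\varphi\to u\varphi$ (and a corresponding $e_L\to we_L$ with $w^{n+1}$ matching $u^{n(n+1)/2}$), both $\rho$ and the logarithmic derivatives $\overline\partial b_k/b_k$, $\partial\overline{b}_k/\overline{b}_k$ acquire additive correction terms involving the derivatives of $u$ and $w$. I would need to verify that these corrections combine precisely so that $\omega$ transforms as a connection matrix for the new unitary frames, adding $e_k^{-1}\,de_k$ to each diagonal entry and preserving the off-diagonal blocks. Because $b_k$ is canonically a function on $X$ by Remark \ref{theta-b}, this reduces to a direct computation involving the cocycle data of $K_M$ and $L$, which I expect to close the argument.
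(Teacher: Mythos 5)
Your proposal is correct and follows essentially the same route as the paper: the forward direction is the computation (\ref{4})--(\ref{11}), and the converse defines $A_k$, $B_k$ locally via (\ref{3}), (\ref{9}), (\ref{14}) and checks that the local data patch into a global flat polarized connection, which is exactly what the paper does (the paper simply asserts the patching step that you flag as the main verification).
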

It is obvious that two solutions to (\ref{11}) give the same solution (\ref{12}) if their corresponding components differ from each other by a phase, that is a smooth function whose values are complex numbers of unit length. We can rephrase the above proposition as follows.
\begin{proposition}\label{toda-cvhs2}
Under the assumption of the previous proposition, there exists a one-one correspondence between solutions $(u_1,\dots, u_n)$ to (\ref{toda}) with $\epsilon= 1$ (resp. $-1$) on $X$ and connections $\nabla$ on $V=\oplus_k V^{n-k,k}$ such that $(\{V^{n-k,k}\},\nabla)$ is a complex variation of Hodge structure Hodge(resp. hermitian)-polarized by $\{h^{n-k,k}\}$ with all Higgs components $b_k$ positive.
\end{proposition}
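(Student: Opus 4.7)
The plan is to deduce Proposition \ref{toda-cvhs2} directly from Proposition \ref{toda-cvhs1} by singling out, within each phase-equivalence class of solutions $(b_1,\dots,b_n)$ to (\ref{11}), the unique representative whose components are all positive. As noted just before the statement, the map $(b_k)\mapsto (u_k):=(\ln|b_k|^2)$ of (\ref{12}) is invariant under multiplying each $b_k$ by a phase, so the phase ambiguity in Proposition \ref{toda-cvhs1} is exactly what must be removed.

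Concretely, given $(u_1,\dots,u_n)$ satisfying (\ref{toda}) on $X$, I would set $b_k:=e^{u_k/2}$, a positive smooth function with $|b_k|^2=e^{u_k}$ and $\ln|b_k|^2=u_k$. A term-by-term comparison then shows that (\ref{toda}) with sign $\epsilon$ is literally equivalent to (\ref{11}) with the same $\epsilon$, the boundary convention $b_0=b_{n+1}=0$ being reproduced by the zero entries outside the tridiagonal pattern of the $A_n$ Cartan matrix. Proposition \ref{toda-cvhs1} then produces a unique connection $\nabla$ making $(\{V^{n-k,k}\},\nabla)$ a complex VHS Hodge- (resp.\ hermitian-)polarized by $\{h^{n-k,k}\}$, whose Higgs components, identified with functions on $X$ via Remark \ref{theta-b}, are the chosen positive $b_k$.

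In the reverse direction, starting from such a connection, Proposition \ref{toda-cvhs1} provides a solution $(b_k)$ to (\ref{11}), and setting $u_k:=\ln b_k^2 = \ln|b_k|^2$ gives a solution to (\ref{toda}) by the same equivalence of equations. The two procedures are visibly mutually inverse: the round trip $(u_k)\mapsto (b_k)=(e^{u_k/2})\mapsto (\ln b_k^2)$ returns $(u_k)$, and in the other direction positivity of $b_k$ forces $b_k=e^{(\ln b_k^2)/2}$, so the same connection is recovered. There is no analytic obstacle to overcome; the entire content is the bookkeeping remark that imposing $b_k>0$ is precisely the rigidification needed to upgrade the correspondence of Proposition \ref{toda-cvhs1} into a bijection with real-valued solutions of the Toda system.
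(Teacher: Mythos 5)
Your proposal is correct and is essentially the paper's own argument: the paper deduces Proposition \ref{toda-cvhs2} from Proposition \ref{toda-cvhs1} via the same observation that $(b_k)\mapsto(\ln|b_k|^2)$ kills exactly the phase ambiguity, so restricting to positive $b_k$ (equivalently $b_k=e^{u_k/2}$) turns the correspondence into a bijection with real-valued solutions of (\ref{toda}).
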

We introduce the following auxiliary notion.
\begin{definition}\label{od}
A complex pre-VHS $(\{\widetilde V^{r,s}\},\nabla)$ over $M$ is called diagonal if the curvature of $\nabla$ is diagonally valued, i.e.
$$F_{\nabla}(Y_1,Y_2)(V^{r,s}_x)\subset V^{r,s}_x\text{, for all }(r,s), Y_1,Y_2\in T_xM, x\in M.$$
\end{definition}
\begin{proposition}\label{d-pre-vhs}
For a Riemann surface with smooth metric $(M,g)$, if we have vector bundles $\{V^{n-k,k}\}$ on $X=M\setminus S$ as in (\ref{2}) with the hermitian metric $\{h^{n-k,k}\}$ canonically associated to $g$ and $\mathcal A_X$-linear maps
$$\theta^{n-k,k}:\mathcal A_X^0(V^{n-k+1,k-1})\longrightarrow\mathcal A_X^{1,0}(V^{n-k,k}),$$
there exists a unique connection $\nabla$ on $V$ making $(\{V^{n-k,k}\}, \nabla)$ a diagonal complex pre-VHS which is polarized by $\{h^{n-k,k}\}$ and whose pre-Hodge field is given by $\{\theta^{n-k,k}\}$.
\end{proposition}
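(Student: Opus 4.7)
Any connection $\nabla$ on $V$ splits, by Definition~\ref{cvhs}, as $\nabla=\theta+\nabla^{\mathrm{Hodge}}+\theta'$. Definition~\ref{pol} then forces $\theta'=\epsilon\theta^{*}$ (with $\epsilon=\pm 1$ according to the polarization type), so since $\theta$ is prescribed, only $\nabla^{\mathrm{Hodge}}$ remains to be determined. As a metric connection preserving each summand, in the local unitary frames $e_k=e_L\otimes\varphi^{\otimes-k}$ it is encoded by purely imaginary $1$-forms $A_0,\dots,A_n$.

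The next step is the curvature computation already carried out in the paragraph leading up to (\ref{5}). Since $M$ is one complex-dimensional, the would-be strictly $2$-off-diagonal entries of $\omega\wedge\omega$ involve products of two $(1,0)$- or two $(0,1)$-forms and vanish automatically; the polarization relation $\theta'=\epsilon\theta^{*}$ identifies the strictly upper and lower first off-diagonal entries of $F_\nabla$ up to complex conjugation. Diagonality of $F_\nabla$ is therefore equivalent to the $n$ relations (\ref{6}), $dB_k+(A_k-A_{k-1})\wedge B_k=0$. Substituting $B_k=b_k\varphi$ (Remark~\ref{theta-b}) and extracting the $(0,1)$-component yields (\ref{7}); since $A_k$ is purely imaginary the $(0,1)$-part determines the $(1,0)$-part, so each difference $A_{k-1}-A_k$ is fully fixed by (\ref{9}).

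These are $n$ equations on $n+1$ unknowns, leaving a one-dimensional freedom $A_k\mapsto A_k+\gamma$ for a common purely imaginary $1$-form $\gamma$. I would eliminate it by imposing the trace normalization (\ref{13}), $A_0+\cdots+A_n=0$---the natural requirement that $\nabla^{\mathrm{Hodge}}$ induce the flat connection on $\det V=L^{\otimes(n+1)}\otimes K_M^{\otimes-n(n+1)/2}|_X$ canonically trivialized by (\ref{1}). This is consistent with both polarization types since the Weil operator acts as $\prod_k i^{n-2k}=1$ on $\det V$. The $n$ difference equations together with this trace condition yield (\ref{14}) for $A_0$ and then, via (\ref{9}), every other $A_k$.

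The remaining task---where I expect the real work to sit---is to verify that the locally defined $A_k$'s glue into a smooth global connection on each $V^{n-k,k}$. Under a compatible frame change $(e_L,\varphi)\mapsto(\zeta e_L,\eta\varphi)$ with $\zeta^{n+1}=\eta^{n(n+1)/2}$, both (\ref{14}) and (\ref{9}) transform correctly as connection $1$-forms, giving the gluing. The one genuinely delicate point is the apparent singularity of $\overline\partial b_k/b_k$ in (\ref{9}) at zeros of $\theta^{n-k,k}$; I would handle this by reading (\ref{7}) intrinsically as $\overline\partial b_k-ib_k\rho^{0,1}=b_k(A_{k-1}-A_k)^{0,1}$ and arguing that its unique solution $A_{k-1}-A_k$ extends smoothly across the zero locus of $b_k$. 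Once that smoothness is in hand, existence is the just-performed construction and uniqueness is built into every step.
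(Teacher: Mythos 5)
Your construction is, in substance, the paper's own proof: the paper simply points back to the computation of Section \ref{Cvhs}, namely that diagonality in local unitary frames is exactly (\ref{6}), that (for nonvanishing $b_k$) this is equivalent to (\ref{9}), that the trace condition (\ref{13}) then yields (\ref{14}) and hence determines every $A_k$, and that the local data patch to a global connection. Your explicit identification of the residual freedom $A_k\mapsto A_k+\gamma$ and its elimination via (\ref{13}) matches what the paper does implicitly (it builds trace-freeness of $A^{\rm Hodge}$ into its frame description of $\nabla^{\rm Hodge}$), and your justification via the canonical trivialization of $\det V$ coming from (\ref{1}) is consistent; being explicit about this normalization is, if anything, an improvement in bookkeeping, though like the paper you are adding it as a normalization rather than deriving it from the stated conditions.

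The one step that would fail is your proposed handling of zeros of $\theta^{n-k,k}$. The claim that the solution $A_{k-1}-A_k$ of (\ref{7}) ``extends smoothly across the zero locus of $b_k$'' is false in general, and in fact the proposition itself breaks down if zeros are allowed: taking locally $b_k=\bar z_\alpha$ (a perfectly admissible $\mathcal A_X$-linear $\theta^{n-k,k}$), equation (\ref{7}) evaluated at the zero forces $\overline\partial b_k=0$ there, a contradiction, so no connection with the required properties exists near that point, let alone a smooth extension of $\overline\partial b_k/b_k-\partial\overline b_k/\overline b_k-i\rho$; likewise uniqueness fails if some $b_k$ vanishes identically on an open set, since (\ref{6}) then places no constraint on $A_{k-1}-A_k$ there, on top of the gauge freedom you already noted. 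The intended hypothesis, carried over from assumption (b) preceding Remark \ref{theta-b} and the only case in which the proposition is used (Proposition \ref{main bij}, where the $b_k$ are positive), is that all $\theta^{n-k,k}$ are nonvanishing; the paper's proof implicitly works in that setting, where your argument coincides with it and is complete. So you should drop the extension argument and instead invoke (or state) the nonvanishing hypothesis explicitly, noting that without it the conclusion is simply not true.
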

\begin{proof}
By checking the discussion above carefully, the local condition for the diagonality in this situation is precisely (\ref{6}), which implies (\ref{9}), and hence (\ref{14}) by (\ref{13}). Therefore every $A_k$ is determined by all $b_s$(i.e. by $\theta$). It is also easy to see that the local descriptions patch together well to yield a global connection, which is clearly diagonal. 
\end{proof}
\begin{remark} In view of this proposition, we can view the Toda system, or more precisely (\ref{11}), as the condition on a ``potential pre-Hodge field" $\theta=(b_1,\dots,b_n)$ such that its uniquely associated diagonal complex pre-VHS is a complex VHS.
\end{remark}
We could have had the entire discussion above on a Riemann surface with metric $(X,g)$ without mentioning any ambient manifold $M$ and the complement $S$. Later we will come back to the situation in Section \ref{int} that $(M,g)$ be a compact Riemann surface with a smooth metric and 
$$S:=\{p\in M:\mu_j(p)\neq 0\text{ for some }j\}$$ 
is the set of punctures of an assignment of singular strengths $\mu_j$, and this is the reason we chose to state the results in the above manner.
\section{The associated Higgs bundles of an assignment of singular strehgths}\label{Higgs}
We recall the definition in \cite{simpson}. Let $X$ be a complex manifold.
\begin{definition}\label{higgs}
A Higgs bundle $(E,\Phi)$ consists of
\begin{itemize}
\item[(i)] a holomorphic vector bundle $E$ over $M$ and
\item[(ii)] a holomorphic ${\rm End}(E)$-valued $(1,0)$-form $\Phi$, called the Higgs field. 
\end{itemize}
\end{definition}
Suppose $H$ is a hermitian metric on $E$. We will denote the Chern connection of $E$ associated to $H$ by $\nabla^H$ and the adjoint of $\Phi$ with respect to $H$ by $\Phi^*$. Then $\nabla:=\Phi+\nabla^H+\Phi^*$ is a connection on $E$ as well. Typical
examples are complex pre-variations of Hodge structure with a Hodge-polarization with $F_{\nabla^{\rm{Hodge}}}^{0,2}=0$. For any such complex pre-VHS there is a canonical structure of holomorphic vector bundle on each $(\{V^{r,s}\}$; it also implies that $(\nabla^{\rm Hodge})^{0,1\wedge}\theta=0$, i.e. $\theta$ is holomorphic. 

In the rest of this section we let $M$ be a compact Riemann surface, $\mu_j, j=1,\dots, n$ an assignment of singular strengths as in Section \ref{int}, and
$$S:=\{p\in M:\mu_j(p)\neq 0\text{ for some }j\}.$$ 
\begin{definition} An $n$-tuple of functions $(b_1,\dots,b_n)$ on $X:=M\setminus S$ is of type $\mu=(\mu_1,\dots,\mu_n)$ if all $b_k$ are smooth positive functions and for each $p\in S$, there exists a coordinate chart $(U, z)$ centered at $p$ and positive smooth bounded functions $\hat b_k$ on $U$ such that $$b_k=\hat b_k|z|^{\mu_k(p)}$$ on $U$, $k=1,\dots,n$.
\end{definition}

Now we are going to make some choices which will be fixed through the rest of this section:\\
\begin{itemize}
\item[(1)] In Section \ref{Cvhs} we made use of a complex line bundle $L$ satisfying (\ref{1}). We will choose a specific one as follows: fix a square-root $K_M^{\frac{1}{2}}$ of $K_M$ (which exists in any case) and take $L:=(K_M^{\frac{1}{2}})^{\otimes n}$.
\item[(2)] Fix a holomorphic atlas of charts $\{(U_\alpha,z_\alpha)\}$ on each of whose member $K_M^{\frac{1}{2}}$ is trivialized by a holomorphic frame $\sigma_\alpha$ with $\sigma_\alpha^{\otimes 2}=dz_\alpha$. If $\sigma_\beta=\psi_{\alpha\beta}\sigma_\alpha$ on $U_\alpha\cap U_\beta$, then 
\begin{equation}\label{psi-J}
\psi_{\alpha\beta}^2=J_{\alpha\beta}:=\frac{dz_\beta}{dz_\alpha}.
\end{equation}
\item[(3)] For a smooth riemannian metric $g$ on $M$, when writing 
$g=\varphi_\alpha\cdot\overline\varphi_\alpha$ with a nonvanishing $1$-form $\varphi_\alpha$ in a coordinate chart $(U_\alpha,z_\alpha)$, we always choose $\varphi_\alpha$ to be the unique positive multiple of $dz_\alpha$, i.e. $\varphi_\alpha=\lambda_\alpha dz_\alpha$, $\lambda_\alpha>0$. We have 
\begin{equation}\label{lambda-J}
\frac{\lambda_\alpha}{\lambda_\beta}=|J_{\alpha\beta}|
\end{equation} 
on $U_\alpha\cap U_\beta$. In addition, if 
$d\varphi_\alpha=-i\rho_\alpha\wedge\varphi_\alpha$ for a real $1$-form $\rho_\alpha$, then 
\begin{equation}\label{lambda-rho}
\frac{\overline\partial\lambda_\alpha}{\lambda_\alpha}=-i\rho_\alpha^{0,1}.
\end{equation}
\end{itemize}
\begin{definition}\label{ass higgs} For any assignment of singular strengths $\mu=(\mu_1,\dots,\mu_n)$, let $\widetilde V^{n-k,k}$ be vector bundles (\ref{2}) with the holomorphic structure induced by that of $K_M$ and $L$ chosen above and $\widetilde V=\oplus_k\widetilde V^{n-k,k}$.\\
\begin{itemize}
\item[(1)] Let $E=\widetilde V|_X$ and $E_k=\widetilde V^{n-k,k}|_X$. There is a natural Higgs field $\Phi$ on $V$ whose components $\Phi_k$ correspond to 
$$1\in\Gamma(M,K_M\otimes{\rm Hom}(L\otimes K_M^{\otimes-k+1},L\otimes K_M^{\otimes-k}).$$
\item[(2)] A smooth metric $H=\oplus_k H_k$ on the Higgs bundle $(V,\Phi)$ is of type $\mu$ if in a sufficiently small coordinate chart $(U,z)$ centered at any puncture $p$, $H_k$ and $|z|^{\mu_k(p)}$ are mutually bounded with respect to local trivializations of $\widetilde V^{n-k,k}$.
\end{itemize}
\end{definition}
The key correspondence is given by the following proposition.
\begin{proposition}\label{main bij} 
There is a one-one correspondence between $n$-tuple of functions $(b_1,\dots,b_n)$ on $X=M\setminus S$ of type $\mu$ and hermitian metrics $H$ of type $\mu$ on the Higgs bundle $(V,\Phi)$ with $\det H=1$. Under this correspondence, a solution $(b_1,\dots,b_n)$ to (\ref{11}) corresponds to a metric $H$ with $\nabla=\Phi+\nabla^H+\Phi^*$ flat.
\end{proposition}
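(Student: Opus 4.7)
My plan is to establish the bijection by writing down an explicit local formula in holomorphic frames of $E$, and then to derive the Toda system (\ref{11}) as the flatness of $\Phi+\nabla^H+\Phi^*$ by conjugating the Higgs picture back to the pre-VHS construction of Section \ref{Cvhs}.

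On each holomorphic chart $(U_\alpha,z_\alpha)$ from the atlas fixed above I would use the holomorphic frame $s_k:=\sigma_\alpha^{\otimes n}\otimes(dz_\alpha)^{-k}$ of $E_k$ and declare
$$H(s_k,s_k):=\lambda_\alpha^{2k-n}\cdot(b_1b_2\cdots b_k)^{2}\cdot\beta^{-2/(n+1)},\qquad\beta:=b_1^nb_2^{n-1}\cdots b_n.$$
The transformation rules (\ref{psi-J}) and (\ref{lambda-J}) show this is chart-independent. The type-$\mu$ hypothesis on $(b_1,\dots,b_n)$ gives the singular behavior required by Definition \ref{ass higgs}(2), and a direct computation using $\sum_{k=0}^n(2k-n)=0$ together with $\sum_{k=0}^n\sum_{j=1}^k 1=\sum_{j=1}^n(n+1-j)$ yields $\det H=1$. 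Conversely, given $H$ of type $\mu$ with $\det H=1$, I would set $b_k:=\lambda_\alpha^{-1}\sqrt{H(s_k,s_k)/H(s_{k-1},s_{k-1})}$: this is just the pointwise norm of the tautological Higgs component $\Phi_k:E_{k-1}\to E_k\otimes K_M$, hence a well-defined positive smooth function on $X$, and the type-$\mu$ asymptotics of $H$ transfer to those of the $b_k$'s. The two maps are mutually inverse because the ratios $H_k/H_{k-1}$ determine each $H_k$ up to a single overall positive factor, which $\det H=1$ pins down.

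For the second assertion I would perform a local change of frame $f_k:=g_ke_k$ from the unitary frame $e_k=e_L\otimes\varphi^{-k}$ of Section \ref{Cvhs} to a positive multiple of $s_k$, with $g_k$ assembled from $\lambda_\alpha$, $b_1,\dots,b_k$ and $\beta$. Using (\ref{lambda-rho}) together with (\ref{9}) and (\ref{14}), one checks that under this gauge the $(0,1)$-part of $\nabla^{\rm Hodge}$ coincides with $\bar\partial_E$, so that $\nabla^{\rm Hodge}$ becomes the Chern connection of $H$, $\theta=(b_k\varphi)$ becomes the tautological $\Phi$, and the polarization converts $\theta^*$ into $\Phi^*$ with respect to $H$. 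Thus the Section \ref{Cvhs} connection and $\Phi+\nabla^H+\Phi^*$ are gauge-equivalent, and Proposition \ref{toda-cvhs1} then identifies flatness of the latter with $(b_1,\dots,b_n)$ solving (\ref{11}).

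The main obstacle is the Dolbeault matching: the holomorphic structure on $V$ coming from $(\nabla^{\rm Hodge})^{0,1}$ depends on the $b_k$'s via (\ref{9}), whereas that of $E$ is fixed a priori; producing the gauge $g_k$ that identifies the two while simultaneously intertwining the polarization with the Chern connection and the Higgs field is where the computational work lies.
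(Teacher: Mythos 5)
Your proposal is correct and follows essentially the same route as the paper: your explicit formula $H(s_k,s_k)=\lambda_\alpha^{2k-n}(b_1\cdots b_k)^2(b_1^nb_2^{n-1}\cdots b_n)^{-2/(n+1)}$ is exactly the paper's $|(\delta_k)_\alpha|^2$ from (\ref{delta})--(\ref{h-h}), obtained there by solving $(\nabla^{\rm Hodge})^{0,1}(e'_k)_\alpha=0$ for a holomorphic frame, and your recovery of $b_k$ as the $H$-norm of $\Phi_k$ is the paper's inversion step. The flatness statement is likewise handled as in the paper, by observing that the gauge identifying the pre-VHS picture with $(E,\Phi,H)$ intertwines $\theta+\nabla^{\rm Hodge}+\theta^*$ with $\Phi+\nabla^H+\Phi^*$ and then invoking Proposition \ref{toda-cvhs1}.
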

\begin{proof}
We start with $(b_1,\dots,b_n)$ of type $\mu$ first. By Proposition \ref{d-pre-vhs}, we have the unique diagonal complex pre-VHS $(V:=\oplus_k V^{n-k,k},\nabla)$ (where $V^{n-k,k}$ is the restriction of $\widetilde V^{n-k,k}$) with pre-Hodge field $(b_1,\dots,b_n)$ polarized by the metric $h$ naturally associated to $g$. As mentioned at the beginning of this section, this complex pre-VHS determines a Higgs bundle, which will seen to be isomorphic to the Higgs bundle $E$ in Definition \ref{ass higgs}, which has $\widetilde V$ as a natural holomorphic extension. 

In each coordinate chart $(U_\alpha,z_\alpha)$ we select the unitary frame
$(e_k)_\alpha:=(\sigma_\alpha/|\sigma_\alpha|)^{\otimes n}\otimes\varphi_\alpha^{\otimes-k}$ for $\widetilde V^{n-k,k}$, $k=1,\dots,n$. We need to get a holomorphic frame $(e'_k)_\alpha=(\delta_k)_\alpha(e_k)_\alpha$ on $U_\alpha\setminus S$ for each $\alpha$. This means $(\nabla^{\rm Hodge})^{0,1}(e'_k)_\alpha=0$, or equivalently (by (\ref {13}),(\ref{14}), and (\ref{lambda-rho}))
$$\frac{\overline\partial(\delta_k)_\alpha}{(\delta_k)_\alpha}+\frac{\overline\partial[(b_1^nb_2^{n-1}\cdots b_n)^{\frac{1}{n+1}}(b_1\cdots b_k)^{-1}]}{(b_1^nb_2^{n-1}\cdots b_n)^{\frac{1}{n+1}}(b_1\cdots b_k)^{-1}}+\frac{\overline\partial\lambda^{\frac{n}{2}-k}_\alpha}{\lambda^{\frac{n}{2}-k}_\alpha}=0.$$ 
Therefore, we may take $(\delta_k)_\alpha$ to be $(b_1^nb_2^{n-1}\cdots b_n)^{\frac{-1}{n+1}}b_1\cdots b_k\lambda_\alpha^{k-\frac{n}{2}}$ multiplied by any holomorphic function on $U_\alpha\setminus S$. The simplest choice is

\begin{equation}\label{delta}
(\delta_k)_\alpha:=(b_1^nb_2^{n-1}\cdots b_n)^{\frac{-1}{n+1}}b_1\cdots b_k\lambda_\alpha^{k-\frac{n}{2}}.
\end{equation} 
\\
Then $(e'_k)_\beta=\psi_{\alpha\beta}^nJ_{\alpha\beta}^{-s}(e'_k)_\alpha$:

\begin{eqnarray*}
\frac{(e'_k)_\beta}{(e'_k)_\alpha}&=&\left(\frac{\lambda_\beta}{\lambda_\alpha}\right)^{k-\frac{n}{2}}\left(\frac{\sigma_\beta}{\sigma_\alpha}\right)^n\left|\frac{\sigma_\beta}{\sigma_\alpha}\right|^{-n}\left(\frac{\varphi_\beta}{\varphi_\alpha}\right)^{-k}\\
\\
&=&\psi_{\alpha\beta}^nJ_{\alpha\beta}^{-k}.
\end{eqnarray*}
\\
This shows that $E_k$ is the restriction of $\widetilde V^{n-k,k}$. We define a smooth metric $H_k$ on $E_k$ by setting on $U_\alpha\setminus S$ 
\begin{equation}\label{h-h}
(H_k)_\alpha:=|(\delta_k)_\alpha|^2.
\end{equation}
It is direct to show that this defines
a metric on $E_k$, which is of type $\mu$ since 
$$(b_1^nb_2^{n-1}\cdots b_n)^{\frac{-1}{n+1}}b_1\cdots b_k|(f_k)_\alpha|^{-1}$$ 
has the same order as $|z_\alpha^{d_k(p)}|$ near the punctures $p\in S$ by the assumption that $(b_1,\dots,b_n)$ is of type $\mu$.

Conversely, from a metric of bounded type $H=\oplus_k H_k$ 
we can get 
$$(b_1^nb_2^{n-1}\cdots b_n)^{\frac{-1}{n+1}}b_1\cdots b_k$$ 
back from (\ref{h-h}) and (\ref{delta}) and hence can obtain all $b_k$. This establishes the expected one-one correspondence. 

As for the last statement, note that the underlying bundles on both sides of the correspondence are smoothly equivalent over the compliment $X$ the punctures $S$ and the connections on both sides are equivalent under this identification. By Proposition \ref{toda-cvhs1} the proof is completed.
\end{proof}

By Proposition \ref{main bij}, finding solutions to Toda systems of VHS type with singular strength assignment $\mu$ becomes finding hermitian metrics $H$ of type $\mu$ on $V$ with $\nabla=\Phi+\nabla^{H}+\Phi^*$. In the next section we will obtain a criterion of existence of such kind of metrics.
\section{Stability and Higgs-Hermitian-Yang-Mills metrics}\label{simpson}
In this section we provide the criterion for the existence of solutions to Toda systems of VHS type and prove their uniqueness. Our main ingredient is Simpson's theory of constructing Higgs-Hermitian-Yang-Mills metrics on Higgs bundles from stability \cite{simpson}. Let $(X,g)$ be a Ka\"hler manifold, $(E,\Phi)$ a Higgs bundle over $X$ and $H$ a smooth hermitian metric on $E$ as in Section \ref{Higgs}.
Recall that we have set $$\nabla:=\Phi+\nabla^H+\Phi^*.$$
In the following, we will denote the curvature $F_\nabla$ of such $\nabla$ induced by $H$ as $F^{\rm Higgs}_H$.
We suppose that $(X,g)$ satisfies suitable assumptions (cf. \cite{simpson}, Section 2, Assumptions 1,2 and 3), which will be fulfilled in our situation (cf. \cite{simpson}, Propositons 2.2 or 2.4). 
\begin{definition}\label{hym}
The metric $H$ is a Hermitian-Yang-Mills metric if 
the trace-free part of $\Lambda_gF^{\rm Higgs}_H$ vanishes.
\end{definition}
We will need the notion of stability. Let $A$ be a group acting by biholomorphic maps of $X$ preserving the metric $g$ and acting compatibly by automorphisms $a:E\longrightarrow E$ preserving the metric $K$ and acting on $\Phi$ by homotheties $a\Phi a^{-1}=\lambda (a)\Phi .$
\begin{definition}\label{mis} (\cite{simpson}, p.$877,878$)
\begin{itemize}
\item[(1)] A sub-Higgs sheaf of a Higgs bundle $(E,\Phi)$ is an analytic subsheaf $\mathcal V\subset\mathcal O(E)$ such that 
$\Phi:\mathcal V\longrightarrow\mathcal O(K_X)\otimes\mathcal V$. (If $\mathcal V$ is saturated, outside a set of codimension $2$ it is the coherent sheaf associated to a subbundle of $E$.)
\item[(2)] For a saturated subsheaf $\mathcal V$ and a smooth metric $K$ on $E$ such that ${\sup_X|\Lambda_gF^{\rm Higgs}_K|_K}<\infty$, 
$${\rm deg}(\mathcal V,K):=i\int_X{\rm Tr}\ \Lambda_gF^{\rm Higgs}_K.$$
(This is either a real number of $-\infty$ by \cite{simpson}, Lemma 3.2.)
\item[(3)] $(E,\Phi,K)$ is stable with respect to the $A$-action if for every proper saturated sub-Higgs sheaf $\mathcal V$ preserved by $A$,
$$\frac{{\rm deg}(\mathcal V,K)}{{\rm rk}(\mathcal V)}<\frac{{\rm deg}(E,K)}{{\rm rk}(E)}.$$ 
\end{itemize}
\end{definition}
Our main tool is the following result due to Simpson (cf. \cite{simpson}, Theorem 1 and Proposition 3.3).
\begin{theorem}\label{CS} $(\bf Simpson)$ Let $(X,g)$, $(E,\Phi)$, $K$, and $A$ satisfy all conditions above. If $(E,\Phi,K)$ is stable with respect to the $A$-action, then there exists a smooth $A$-invariant Higgs-Hermitian-Yang-Mills metric $H$ with $H$ and $K$ mutually bounded, 
$$\det H=\det K\text{, and }\ \overline\partial h+[\Phi,h]\in L^2_{g,K},$$
where $h$ is the unique endomorphism of $E$ such that
$$(\cdot,\cdot)_H=(h(\cdot),\cdot)_K.$$
If furthermore $\Phi\wedge\Phi=0$, the first Chern form $c_1(E,K)=0$, and 
$\int_Xc_2(E,K)\wedge\omega_g^{n-2}=0$, then the connection $\nabla=\Phi+\nabla^H+\Phi^*$ is flat.  Conversely, if there exists an $A$-invariant Higgs-Hermitian-Yang-Mills metric, then
$$\frac{{\rm deg}(\mathcal V,K)}{{\rm rk}(\mathcal V)}\leq\frac{{\rm deg}(E,K)}{{\rm rk}(E)}$$ for every proper saturated sub-Higgs sheaf $\mathcal V$ preserved by $A$ and equality holds only if $E=\mathcal V\oplus \mathcal V^{\perp}$ is an orthogonal direct sum of Higgs subbundles.
\end{theorem}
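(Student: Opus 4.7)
The plan is to prove this via the Donaldson-type nonlinear heat flow for hermitian metrics on Higgs bundles, adapted to the (possibly non-compact) K\"ahler setting of \cite{simpson}. The three conclusions (existence of a HYM metric, flatness under extra hypotheses, semistability inequality from existence) are handled in turn.

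For the existence of the Higgs-Hermitian-Yang-Mills metric $H$, I would start from $K$ and run the flow
\begin{equation*}
H_t^{-1}\partial_t H_t=-2i\bigl(\Lambda_g F^{\rm Higgs}_{H_t}-c\cdot{\rm id}_E\bigr),\qquad H_0=K,
\end{equation*}
with $c$ chosen so that the right-hand side is trace-free, which preserves $\det H_t=\det K$ along the flow. Short-time existence is routine parabolic theory, and averaging by the compact $A$-action (or working in the $A$-invariant category) preserves $A$-invariance. Long-time existence rests on a maximum principle for $|\Lambda_g F^{\rm Higgs}_{H_t}|$, applied through the exhaustion / finite-volume Assumptions of \cite{simpson}. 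For convergence as $t\to\infty$ one identifies the flow with the gradient flow of Donaldson's functional $M(K,H)$, whose integration-by-parts identity from \cite{simpson} shows it is bounded below on metrics satisfying $\overline\partial h+[\Phi,h]\in L^2_{g,K}$. If convergence fails, then the Uhlenbeck-Yau construction extracts a weakly holomorphic projection from $h_t:=K^{-1}H_t$ yielding an $A$-invariant saturated sub-Higgs sheaf $\mathcal V\subset\mathcal O(E)$ whose slope violates stability; hence the flow must converge, and its limit is the desired HYM $H$ with the stated mutually-bounded and $L^2$ properties.

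For the flatness statement under $\Phi\wedge\Phi=0$, $c_1(E,K)=0$ and $\int_X c_2(E,K)\wedge\omega_g^{n-2}=0$, I would invoke the Higgs-Bogomolov-Simpson inequality. Since $\det H=\det K$ and $H,K$ are mutually bounded, the Chern number conditions pass to $H$, and the Chern-Weil representatives of $c_1(E,H)$ and $2c_2(E,H)-c_1(E,H)^2$ become (up to signs) $L^2$-norms of pieces of the full curvature of $\nabla=\Phi+\nabla^H+\Phi^*$; combined with the HYM condition these vanishings force every component of $F_\nabla$ to be zero. For the converse, an $A$-invariant proper saturated sub-Higgs sheaf $\mathcal V$ preserved by an $A$-invariant HYM metric $H$ admits the Chern-Weil identity
\begin{equation*}
{\rm rk}(E)\,{\rm deg}(\mathcal V,H)-{\rm rk}(\mathcal V)\,{\rm deg}(E,H)=-\|\beta\|_{L^2}^2\le 0,
\end{equation*}
where $\beta$ is the second fundamental form of $\mathcal V\subset E$; equality forces $\beta=0$, hence an orthogonal splitting into $\Phi$-invariant holomorphic sub-bundles, and the inequality transfers to $K$ by mutual boundedness.

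The principal obstacle is the convergence step of the heat flow on a non-compact base. On a compact manifold the Donaldson-Uhlenbeck-Yau machinery is by now standard, but on the quasi-projective surfaces/curves eventually needed for the Toda application one must control the $L^2$ behavior of $h_t$ near the ends and ensure that the weakly holomorphic projector extracted from a divergent flow defines a \emph{saturated} sub-Higgs sheaf that is $A$-invariant and has well-defined slope compatible with ${\rm deg}(\cdot,K)$. This is exactly the technical content of \cite{simpson}, Propositions~2.2--2.4 and Lemma~3.2, and is where the real weight of the theorem lies; the rest is comparatively formal once those analytic foundations are in place.
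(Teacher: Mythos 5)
This statement is not proved in the paper at all: it is imported verbatim as Simpson's result (\cite{simpson}, Theorem 1 together with Propositions 3.3/3.4), and the paper's ``proof'' is the citation. Your sketch is, in substance, a faithful outline of Simpson's own argument in the cited source -- the Donaldson-type heat flow minimizing the functional $M(K,H)$ under the noncompactness Assumptions 1--3, extraction of a destabilizing $A$-invariant saturated sub-Higgs sheaf via an Uhlenbeck--Yau weakly holomorphic projection when the flow fails to converge, flatness from the Chern--Weil/Bogomolov identity once $c_1$ and the $ch_2$-integral vanish, and the converse semistability via the degree formula for subsheaves with the second fundamental form term. So you have reconstructed the route of the reference rather than anything the paper itself argues, and you correctly locate the real analytic weight in Simpson's Propositions 2.2--2.4 and Lemma 3.2.

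Two small caveats, both at the level of phrasing rather than structure. First, your final step ``the inequality transfers to $K$ by mutual boundedness'' is too casual: mutual boundedness of $H$ and $K$ alone does not make $\deg(\mathcal V,H)$ and $\deg(\mathcal V,K)$ agree on a noncompact base; one also needs the $L^2$ control on $\overline\partial h+[\Phi,h]$ (this is precisely why that condition is built into the statement, and it is how Simpson's Lemma 3.2 framework compares degrees of changed metrics). Second, $A$-invariance of the limit is obtained in Simpson from the canonicity/uniqueness of the flow started at the $A$-invariant metric $K$, not by averaging over $A$, which need not be compact in the general statement (it happens to be compact in this paper's application). Neither point changes the architecture of your sketch.
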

Now we give the proof of our main result.
\begin{theorem}\label{main thm} Let $M$ be a compact Riemann surface with a smooth riemannian metric $g$. Let $\mu=(\mu_1,\dots,\mu_n)$ be an assignment of singular strengths and 
$$S:=\{p\in M:\mu_j(p)\neq 0\text{ for some }j\}.$$ There exists at most one $\mu$-admissible solution $(u_1,\dots,u_n)$ to the Toda system of VHS type
$$
\begin{pmatrix}
\frac{1}{4}\Delta_gu_1-\frac{K_g}{2}\\
\frac{1}{4}\Delta_gu_2-\frac{K_g}{2}\\
\vdots\\
\frac{1}{4}\Delta_gu_n-\frac{K_g}{2}
\end{pmatrix}
=
\begin{pmatrix}
2&-1&&\\
-1&2&&\\
&&\ddots&-1\\
&&-1&2
\end{pmatrix}
\begin{pmatrix}
e^{u_1}\\
e^{u_2}\\
\vdots\\
e^{u_n}
\end{pmatrix}.
$$
There exists a $\mu$-admissible solution if and only if
\begin{equation}\label{condition}
d_{n-l+1}+\cdots +d_n <l(n-l+1)({\rm genus}(M)-1),
\end{equation}
$l=1,\dots,n,$
where 
$$d_k:=\sum_{p\in M}\left(-\frac{1}{n+1}\sum_{j=1}^n(n-j)\mu_j(p)+\sum_{j=1}^k\mu_j(p)\right),$$ 
$k=1,\dots,n$.
\end{theorem}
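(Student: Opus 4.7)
The plan is to convert the problem into an existence/uniqueness question for a Higgs-Hermitian-Yang-Mills metric on the Higgs bundle $(E,\Phi)$ over $X=M\setminus S$ constructed in Section \ref{Higgs}, and then apply Simpson's criterion (Theorem \ref{CS}). By Proposition \ref{main bij}, giving a $\mu$-admissible solution to the Toda system is the same as giving a smooth hermitian metric $H$ of type $\mu$ on $E$ with $\det H=1$ for which $\nabla=\Phi+\nabla^H+\Phi^*$ is flat, so it suffices to settle existence and uniqueness of such an $H$.

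First I would classify the proper $\Phi$-invariant saturated subsheaves of $E$. Because each Higgs component $\Phi_k$ corresponds under the natural trivialisation to the constant section $1\in\mathcal O_M$, it is nowhere vanishing; tracking the highest nonzero graded piece of a local section shows that the only such subsheaves are the graded sums $\mathcal V_m:=\bigoplus_{k=m}^{n}E_k$, $m=1,\dots,n$. Using $\widetilde V^{n-k,k}=(K_M^{1/2})^{\otimes(n-2k)}$, their smooth extensions satisfy $\deg\bigoplus_{k=m}^n\widetilde V^{n-k,k}=-m(n-m+1)(\mathrm{genus}(M)-1)$ on $M$. Next I would fix a reference smooth hermitian metric $K$ of type $\mu$ on $(E,\Phi)$ with $\det K=1$ (via Proposition \ref{main bij} applied to any auxiliary type-$\mu$ tuple) and compute the parabolic contribution at the punctures: the weights on the $E_k$ sum to zero at each $p\in S$ because $\det K=1$, so $\deg(E,K)=0$, and a direct combinatorial bookkeeping yields $\deg(\mathcal V_m,K)=-m(n-m+1)(\mathrm{genus}(M)-1)+d_m+\cdots+d_n$. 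With $l:=n-m+1$, the strict slope-stability inequalities $\deg(\mathcal V_m,K)<0$, $m=1,\dots,n$, are then precisely the conditions (\ref{condition}).

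Now I would invoke Theorem \ref{CS} with $A$ trivial. Its analytic hypotheses on $(X,g)$ are covered by the quasiprojective case in \cite{simpson} (cf.\ Propositions 2.2 and 2.4 there), $\Phi\wedge\Phi=0$ holds automatically in one complex dimension, and $c_1(E,K)=0$ since $\det\widetilde V=\mathcal O_M$ and $\det K=1$. Under (\ref{condition}), stability of $(E,\Phi,K)$ produces $H$ mutually bounded with $K$, with $\det H=1$, satisfying the trace-free HYM equation; together with $c_1=0$ this forces $F^{\mathrm{Higgs}}_H=0$ on the curve, so $\nabla$ is flat, and Proposition \ref{main bij} translates this into a $\mu$-admissible Toda solution. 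Conversely, any $\mu$-admissible solution yields a flat (hence HYM) metric, and the second half of Theorem \ref{CS} gives the weak inequalities $\deg(\mathcal V_m,K)\leq 0$; equality is ruled out because it would force an orthogonal direct sum of Higgs subbundles $E=\mathcal V_m\oplus\mathcal V_m^\perp$, incompatible with the nowhere vanishing $\Phi_m:E_{m-1}\to E_m\otimes K_M$ (with $E_{m-1}\subset\mathcal V_m^\perp$ and $E_m\subset\mathcal V_m$). Uniqueness of the Toda solution is then immediate from uniqueness of the Higgs-HYM metric subject to $\det H=1$ in Simpson's theorem, combined with Proposition \ref{main bij}.

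The hard part will be rendering the degree calculation rigorous: the integral $i\int_X\mathrm{Tr}\,\Lambda_gF^{\mathrm{Higgs}}_K$ for a type-$\mu$ reference metric $K$ is improper at $S$ and must be shown to converge, with boundary contribution producing exactly $d_m+\cdots+d_n$. Equally important is verifying that Simpson's quasiprojective analytic assumptions hold for the present $(X,g)$ (possibly after a conformal modification near $S$) and that the HYM metric $H$ produced by Theorem \ref{CS} is itself of type $\mu$, so that Proposition \ref{main bij} applies in its inverse direction to recover the $b_k$ at the end.
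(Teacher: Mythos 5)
Your treatment of the existence criterion is essentially the paper's own proof: the same reduction through Proposition \ref{main bij}, the same classification of the proper saturated sub-Higgs sheaves as the graded tails $F^l=E_{n-l+1}\oplus\cdots\oplus E_n$ (your $\mathcal V_m$ with $m=n-l+1$), the same construction of a reference metric $K$ of type $\mu$ with $\det K=1$, the same degree bookkeeping $\deg(E_j,K_j)=({\rm genus}(M)-1)(n-2j)+d_j$ (the paper does this via Poincar\'e--Lelong), and the same appeal to Theorem \ref{CS} in both directions, including ruling out equality in the converse because an orthogonal splitting $E=F^l\oplus (F^l)^{\perp}$ is incompatible with the nowhere-vanishing components of $\Phi$. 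The caveats you flag at the end (convergence of the degree integral at $S$, Simpson's Assumptions 1--3 for $(X,g|_X)$, and that $H$ is again of type $\mu$ via mutual boundedness with $K$) are exactly the points the paper handles or uses implicitly, so that part is sound.

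The genuine gap is the uniqueness statement. You declare it ``immediate from uniqueness of the Higgs-HYM metric subject to $\det H=1$ in Simpson's theorem,'' but Theorem \ref{CS} as quoted asserts only existence (and, in the converse direction, semistability); it contains no uniqueness clause, and on a noncompact $X$ uniqueness of Hermitian-Yang-Mills metrics is not automatic --- it must be proved among metrics that are mutually bounded, and your route would moreover make uniqueness contingent on stability rather than holding unconditionally. The paper supplies the missing argument (modelled on Simpson's Lemma 10.9): given two $\mu$-admissible solutions, the corresponding flat metrics $H$, $H'$ are both of type $\mu$, hence mutually bounded and diagonal; the endomorphism $h$ with $(\cdot,\cdot)_{H'}=(h\cdot,\cdot)_H$ satisfies, by Simpson's Lemma 3.1(c), $\Delta\,{\rm Tr}\,h=-\bigl|(\overline\partial h+[\Phi,h])h^{1/2}\bigr|_H^2\le 0$, so ${\rm Tr}\,h$ is a bounded subharmonic function, hence harmonic by Assumption 3; therefore $\overline\partial h+[\Phi,h]=0$, i.e.\ $h$ is holomorphic and commutes with $\Phi$. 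Diagonality gives $h={\rm diag}(h_0,\dots,h_n)$ with $h_k>0$, commutation with the nowhere-vanishing nilpotent string $\Phi$ forces $h_0=\cdots=h_n$, and $\det H=\det H'=1$ then gives $h={\rm id}_E$. Without this (or an explicit citation of a proved uniqueness result valid under Simpson's noncompact assumptions), the ``at most one solution'' half of the theorem is unproved in your proposal.
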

\begin{proof} We prove the statement about uniqueness first. The argument is essentially the same as that in the proof of Lemma 10.9 of \cite{simpson}. Let $(E,\Phi)$ be as in Definition \ref{ass higgs} (1). Let $A$ be the group $U(1)^{\times (n+1)}\cap SU(n+1)$ acting on $X$ trivially and on $E$ in the obvious diagonal manner. Suppose we have two solutions corresponding to Higgs-Hermitian-Yang-Mills metrics $H$ and $H'$ respectively. Let $h$ be the unique endomorphism of $E$ such that $(\cdot,\cdot)_{H'}=(h(\cdot),\cdot)_H$. Note that $h$ is a positive definite self-adjoint and bounded with respect to $H$. Taking trace of Lemma 3.1 $(c)$ in \cite{simpson} gives 
$$\Delta_d{\rm Tr}\ h=2\Delta_\partial{\rm Tr}\ h=-\left|\big(\overline\partial h+[\Phi,h]\big)h^{\frac{1}{2}}\right|^2_{H}\leq 0.$$
As mentioned above, Assumption 3 in \cite{simpson} holds for $(X,g|_X)$, and hence a positive bounded subharmonic function must be harmonic. Therefore, $$\overline\partial h+[\Phi,h]=0,$$ 
which is equivalent to saying that $h$ is a holomorphic endomorphism of $E$ commuting with $\Phi$. Since $h$ commutes with the $A$-action, it acts on $E$ diagonally by multiplication with positive numbers $h_0,\dots,h_n$. The commutativity of $h$ with $\Phi$ implies $h_0=\cdots=h_n$. Finally, $h_0\cdots h_n=1$ since $\det H'=\det H=1$. This shows that $h={\rm id_E}$ and the proof is completed. 

Now let $(E, \Phi)$ and $E_k$ be as in Definition \ref{ass higgs}. Suppose that $A$ acts on $E$ diagonally. We equip $E$ with a metric $K:=\oplus_jK_j$ of type $\mu$ (Definition \ref{ass higgs}) such that $\det K=1$. Such a metric can be constructed as follows. Choose a covering of $M$ by coordinate disks $(U_\alpha,z_\alpha)$ and a partition of unity $\{\rho_\alpha\}$ subordinate to $\{U_\alpha\}$. We may assume that 
\begin{itemize}
\item[(1)] each puncture $s\in S$ is contained in $U_\alpha$ for exactly one $\alpha$, denoted as $\alpha(s)$, and $z_{\alpha(s)}(s)=0$;
\item[(2)] for every $s\in S$, there exists an open 
neighborhood $W_{\alpha(s)}\subset U_{\alpha(s)}$ of $s$ such that $\rho_{\alpha(s)}|_{W_{\alpha(s)}}\equiv 1$;
\item[(3)] every $\widetilde V^{n-j,j}$ is trivialized on $U_\alpha$ by a holomorphic local frame $e_{j,\alpha}$.
\end{itemize}
Let $\sigma$ be an element in the fibre of $E_j$ above $p\in M\setminus S$. We write $$\sigma=\sigma_\alpha e_{j,\alpha}(p)$$ 
if $p\in U_\alpha$. Let 
$$f_{\alpha}:=\left\{
\begin{array}{cl}
|z_\alpha|^{2d_k(s)},&\text{ if }s\in U_\alpha\cap S;\\
\\
1,&\text{ otherwise.}
\end{array}
\right.$$
We define the metric $K_j$ on $E_j$ for $j=1,\dots,n$ by setting
$$|\sigma|^2_{K_j}:=\sum_{\alpha:p\in U_\alpha}\rho_\alpha(p)f_\alpha(p)|\sigma_{\alpha}|^2.$$
Finally we let $K_0$ on $E_0$ be defined so that $\det K=1$.

$c_1(E,K)=0$ by the construction of $K$.  $\Phi\wedge\Phi=0$ and $c_2(E,K)=0$ automatically on a Riemann surface.

Since ${\rm dim}\ M=1$, proper saturated sub-Higgs sheaves are exactly proper holomorphic subbundles of $E$ preserved by $\Phi$. By the form of $\Phi$ (a nilpotent string), it is clear that such kind of subbundles are exactly 
$$F^l:=E_{n-l+1}\oplus\cdots\oplus E_n,$$
$ l=1,\dots,n$. 
By Definition \ref{mis},
$${\rm deg}(F^l,K)=\sum_{j=n-l+1}^n{\rm deg}(E_{j},K_j).$$

Note that for each $j$ the smooth metric $K_j$ on $E_j$ can be viewed as a singular metric on $\widetilde V^{n-j,j}$, whose curvature current represents the first Chern class of $\widetilde V^{n-j,j}=L\otimes K_M^{\otimes -j}$.
By the Poincar{\'e}--Lelong formula,
$${\rm deg}{\widetilde V^{n-j,j}}=
{\rm deg}(E_j,K_j)-d_j.$$
Therefore
$${\rm deg}(E_j,K_j)=({\rm genus}(M)-1)(n-2j)+d_j,$$
and hence $${\rm deg}(F^l,K)=({\rm genus}(M)-1)l(l-n-1)+d_{n-l+1}+\cdots+d_n.$$
It is clear that ${\rm deg}(E,K)=0$. If (\ref{condition}) holds for $l=1,\dots,n$, then $(E,\Phi,K)$ is stable. In view of Proposition \ref{main bij} and the first part of Theorem \ref{CS}, we obtain a solution to the Toda system with required asymptotic behaviour near punctures. Conversely, if there exists such a solution, by Proposition \ref{main bij} and the second part of Theorem \ref{CS}, we have 
$$d_{n-l+1}+\cdots+d_n\leq l(n-l+1)({\rm genus}(M)-1),$$
$l=1,\dots,n$. Actually, all inequalities above are strict since the orthogonal complement of $F^l$ is not a sub-Higgs sheaf, $l=1,\dots,n$. Therefore (\ref{condition}) holds for $l=1,\dots,n$.
\end{proof}

\bigskip
\end{document}